\newcommand{\comment}[1]{}
\newcommand{\reals}{{\bf R}}
\newcommand{\BibTeX}{{\rm B\kern-.05em{\sc i\kern-.025em b}\kern-.08em     
    T\kern-.1667em\lower.7ex\hbox{E}\kern-.125emX}}
\newcommand{\spn}{\mathop{\rm span} \nolimits}
\renewcommand{\reals}{\mathbf{R}}
\newenvironment{example}[1][Example]{\begin{trivlist}\item[\hskip \labelsep
{\it #1. }]}{  \goodbreak \end{trivlist}}   
\numberwithin{equation}{section} 
\newtheorem{theorem}[equation]{Theorem}
\newtheorem{lemma}[equation]{Lemma}
\begin{document}

\title{Estimates for more Brascamp-Lieb forms in $L^p$-spaces with
  power weights}

\author{
 R.M.~Brown\footnote{
Russell Brown   is  partially supported by a grant from the Simons
Foundation (\#422756).
} \\ Department of Mathematics\\ University of Kentucky \\
Lexington, KY 40506-0027, USA
\and
K.A.~Ott\footnote{Katharine Ott is partially supported by a grant from the Simons
Foundation (\#526904).} 
\\Department of Mathematics\\Bates College\\Lewiston, ME  04240-6048, USA
}

\date{}

\maketitle

\begin{abstract}
We consider a class of Brascamp-Lieb forms and give conditions which guarantee the 
boundedness of these form on $L^p$-spaces with weights that are a power of the 
distance to the origin. These conditions are close to necessary and sufficient. 
\end{abstract}

\section{Introduction} \label{Introduction}
The goal of this paper is to give conditions which guarantee the 
boundedness of certain Brascamp-Lieb forms on weighted $L^p$-spaces.   
To describe the forms we  study, we let 
$$
E= \{ v_1, \dots, v_N\} \subset \reals ^m \setminus \{0\}$$
be a finite collection of non-zero vectors which does not contain 
a  pair of 
collinear vectors.   We fix $ k \geq 1$ and for 
$ v= ( v^1, \dots, v^m)  \in \reals ^m$ and $ x = (x^1, \dots, x^m ) \in \reals ^ { mk}$, 
we define $ v\cdot x \in \reals ^k$ by 
$ v\cdot x = \sum _ { i =1 }^m v^i x^i.$ If $ f_1 \dots, f_N$ are non-negative, measurable 
functions on $ \reals ^k$, the form we will study is 
\begin{align}\label{formdef}
\Lambda(f_1,\dots,f_N)=\int_{\reals^{km}} \prod_{i=1}^N f_i(v_i\cdot x)\, dx.
\end{align}

We will use $L^p _ \alpha ( \reals ^k)$ to denote the weighted $L^p$-space with 
the norm
$$
\|f\|_{ L_\alpha^p ( \reals ^k )}= \| |\cdot | ^ \alpha f\|_{ L^ p ( \reals ^k)}.
$$
Going forward, all norms will be over $\reals^k$.
Our goal, which is only partially achieved, is to characterize the set of 
indices $(1/p_j,\lambda _j ) _{ j=1} ^N$ for which we have the estimate
\begin{align}\label{mainest}
\Lambda(f_1,\dots,f_N) \leq C\prod_{j=1}^N\|f_j\|_{L^{p_j}_{\lambda_j}}.
\end{align}
The constant $C$ may depend on the set of vectors $E$, the dimension $k$, 
and the indices $ ( 1/p_j, \lambda _j)_{ j=1} ^N$. 
We will focus most of our attention on the case where $ E= \{ v_1, \dots v_N\} 
\subset \reals ^m$ has the property that any subset $K \subset E$ with cardinality 
$\#K =m$ is a basis for $\reals ^m$. We call such a set $E$ {\em generic}. 

We will establish the following theorem, which gives conditions on the indices 
$ ( 1/ p _ j, \lambda _j )_{j=1} ^N $ that guarantee that the bound \eqref{mainest} holds. 

\begin{theorem} \label{suffthm}
Let $ E = \{v_1, \dots, v_N\}$ be a generic set in $\reals^m$. 
Suppose that $(1/p_j, \lambda_j )_{j=1}^N\in ((0,1)\times \reals)^N$ 
and that the following list of conditions are true:
\begin{align}
& \sum_{j=1}^N(\frac{1}{p_j}+\frac  {\lambda_j} k ) = m,\label{Scaling}\\
& \sum_{v_j\notin V } (\frac{1}{p_j}+ \frac {\lambda_j} k)  > m-\dim (V) \quad
\mbox{for all non-zero, proper subspaces $V$ of   $\reals^m$},
\label{v1}\\
& \sum _ { v_j \notin V } \lambda _j \geq 0\quad 
\mbox{for $V$ a subspace of $\reals ^m$}, \label{v2} \\
& \sum_{j=1}^N\frac{1}{p_j}\geq 1.\label{I}
\end{align}
Then estimate \eqref{mainest} holds.
\end{theorem}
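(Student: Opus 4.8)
Writing $g_j=|\cdot|^{\lambda_j}f_j$, so that $\|g_j\|_{L^{p_j}}=\|f_j\|_{L^{p_j}_{\lambda_j}}$, the estimate \eqref{mainest} is equivalent to
\begin{equation*}
\int_{\reals^{km}}\Big(\prod_{i=1}^N|v_i\cdot x|^{-\lambda_i}\Big)\prod_{i=1}^N g_i(v_i\cdot x)\,dx\ \le\ C\prod_{j=1}^N\|g_j\|_{L^{p_j}},
\end{equation*}
a bound for the Brascamp--Lieb form twisted by the homogeneous weight $\prod_i|v_i\cdot x|^{-\lambda_i}$ on $\reals^{km}$; condition \eqref{Scaling} says precisely that the two sides scale the same way. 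The plan is to argue by induction on $m$, with $N$ as a secondary induction parameter and with small $m$ (or the configurations handled in our earlier work) as base cases; at each stage the passage from the entire admissible range of indices down to a finite list of extremal configurations will be carried out by interpolation.

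\textbf{Reduction to the vertices of a polytope.} With $E$ fixed, the set $\mathcal P_E$ of tuples $(1/p_j,\lambda_j)_{j=1}^N$ satisfying \eqref{Scaling}--\eqref{I} together with $0\le 1/p_j\le 1$ is convex; it lies in the affine hyperplane defined by \eqref{Scaling}, and one checks (by testing \eqref{v1} against a hyperplane spanned by $m-1$ of the other vectors, which is legitimate since the hypotheses in fact force $N\ge m+1$) that \eqref{v1} bounds each $\lambda_j$ below while \eqref{v2} with $V=\{0\}$ bounds $\sum_j\lambda_j$ above, so $\mathcal P_E$ is a bounded polytope. Because power weights form a complex interpolation scale, $[L^{p_0}_{\lambda_0},L^{p_1}_{\lambda_1}]_\theta=L^{p_\theta}_{\lambda_\theta}$ with $1/p_\theta$ and $\lambda_\theta$ obtained by affine interpolation of the endpoints, multilinear complex interpolation shows that the set of tuples for which \eqref{mainest} holds is convex; hence it suffices to prove \eqref{mainest} at each vertex of $\mathcal P_E$. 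The strict inequalities in the hypotheses of the theorem guarantee that every tuple allowed by the theorem lies in the relative interior of $\mathcal P_E$, so nothing is lost in passing to the closed polytope.

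\textbf{Analysis of a vertex.} At a vertex of $\mathcal P_E$ a maximal number of constraints is active, and the key combinatorial claim — which uses genericity of $E$ — is that every such vertex is of one of two kinds. First, it may have $1/p_{j}=0$ or $1/p_{j}=1$ for some $j$; then one removes $f_{j}$ (taking a supremum, respectively integrating it out by a linear change of variables in a basis drawn from $E\setminus\{v_{j}\}$), using \eqref{v2} to keep the weight under control, and is left with the form attached to the generic set $E\setminus\{v_{j}\}$ in $\reals^m$ with one function fewer, to which the secondary induction applies. Second, the vertex may be one at which \eqref{v1} is an equality for some proper nonzero subspace $V$; then one writes $\reals^m=V\oplus W$, slices $\reals^{km}$ along the corresponding splitting, and uses Fubini to bound $\Lambda$ by the product of the form associated with $\{v_j:v_j\in V\}$ inside $V$ and the form associated with the images of $\{v_j:v_j\notin V\}$ in $\reals^m/V$, both in strictly smaller dimension. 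One checks that \eqref{Scaling}--\eqref{I} for $E$ imply the corresponding conditions for the two sub-data and invokes the inductive hypothesis on $m$.

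\textbf{The hard part.} The main obstacle is this last reduction. On the combinatorial side one must actually prove that every vertex of $\mathcal P_E$ is of one of the two kinds above — i.e.\ that whenever no $1/p_j$ is extremal and no subspace makes \eqref{v1} an equality, there remains a free direction in $\mathcal P_E$ — a linear-algebra argument that leans on genericity. On the analytic side the weight in the slicing step is the difficulty: restricting $|y|^{\lambda}$ on $\reals^k$ to the affine fibers produced by Fubini is not a power weight, and showing that the discrepancy is harmless — and that the list \eqref{Scaling}--\eqref{I} is self-consistent under $\reals^m=V\oplus W$ — is where \eqref{v2} and \eqref{I}, rather than just \eqref{v1}, get used. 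Establishing these two points constitutes the bulk of the argument.
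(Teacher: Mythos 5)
Your proposal is an outline rather than a proof: you yourself flag that the two central steps --- the combinatorial claim that every vertex of $\mathcal P_E$ is of one of your two types, and the analytic control of the weight $\prod_i|v_i\cdot x|^{-\lambda_i}$ under the Fubini slicing along a critical subspace --- ``constitute the bulk of the argument,'' and neither is carried out. Beyond the incompleteness, the reduction-to-vertices step is flawed in principle. Multilinear complex interpolation does show that the set of admissible tuples is convex, but it only lets you deduce \eqref{mainest} at an interior point from \eqref{mainest} at finitely many points where the estimate actually holds; the vertices of the closed polytope $\mathcal P_E$ are exactly the endpoint configurations where weighted estimates of this type typically fail (compare the Stein--Weiss inequality, where the strict inequalities $\alpha<n/q$, $\beta<n/p'$ cannot be relaxed). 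The theorem is stated only on the relative interior for precisely this reason, and Theorem \ref{necthm} gives no reason to expect validity at a vertex. So even a complete vertex analysis would be attempting to prove estimates that are in general false; one would instead have to manufacture, for each interior point, an ad hoc finite set of non-extremal points at which the estimate can be verified directly --- which is a different (and harder) task than the one you set up. A smaller slip: \eqref{v2} with $V=\{0\}$ bounds $\sum_j\lambda_j$ \emph{below}, not above; the upper bound comes from \eqref{Scaling} and $1/p_j>0$.

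The paper's route avoids slicing and vertex analysis entirely. For $\lambda\equiv 0$ it quotes Bennett--Carbery--Christ--Tao (and a lifting from $k=1$ to general $k$); it then upgrades to Lorentz spaces via Bez--Lee--Nakamura--Sawano, and handles all $\lambda_j\ge 0$ by O'Neil's H\"older inequality in Lorentz spaces together with $|x|^{-\lambda}\in L^{k/\lambda,\infty}$, using \eqref{NotOne} to see that the auxiliary exponents $1/r_j=1/p_j+\lambda_j/k$ stay in $(0,1)$. Negative exponents are then removed one at a time by Lemma \ref{InductionStep}: since $E$ is generic, $v_N$ lies in the span of $v_1,\dots,v_m$, so $|x\cdot v_N|^{\gamma_0}\le C\sum_{j=1}^m|x\cdot v_j|^{\gamma_0}$ pointwise, which redistributes a piece of the most negative weight onto the positive ones; conditions \eqref{v1}--\eqref{v2}, recast combinatorially via Lemmas \ref{detour} and \ref{detour2}, guarantee that each redistributed tuple stays admissible. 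If you want to salvage your plan, the Lorentz-space H\"older mechanism is the tool that replaces your problematic weighted slicing, and the redistribution lemma is what replaces your vertex classification.
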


The next result gives necessary conditions which are close to the 
sufficient conditions in the previous theorem. The set of indices 
$(1/p_j, \lambda _j)_{ j=1}^N$ which satisfy our necessary conditions in
 Theorem \ref{necthm} below form a polytope in the hyperplane defined by 
  equation \eqref{Scaling}. The above result, Theorem \ref{suffthm}, 
 tells us that the estimate  \eqref{mainest} holds on the interior of
 this polytope.  The estimate may hold on part of the boundary,
 but we do not investigate results on the boundary here. 
\begin{theorem}\label{necthm}
Let $E=\{v_1, \dots, v_N\} \subset \reals ^m$ be a set of vectors and assume that
no pair of vectors from $E$ are collinear. 
If  \eqref{mainest} holds,  then the indices $(1/p_j, \lambda_j)_{j=1} ^N
\in ( [0,1]  \times\reals)^N$ satisfy \eqref{Scaling},
\eqref{I},   \eqref{v2}, and 
\begin{align}\label{v1c}
\sum_{ v_j \notin V}(\frac{1}{p_j}+\frac {\lambda_j}k )\geq m -\dim(V) \quad
\mbox{for all subspaces }\, V \subset\reals^m.
\end{align}
\end{theorem}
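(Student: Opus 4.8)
The strategy is the standard one for necessity in Brascamp--Lieb type inequalities: test the inequality \eqref{mainest} against well-chosen families of functions, let a parameter tend to $0$ or $\infty$, and read off the claimed linear inequalities on the indices from the requirement that the two sides scale compatibly. I would organize the argument around four test configurations, one for each of the four conclusions.

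\emph{Scaling \eqref{Scaling}.} Fix any non-negative $f_1,\dots,f_N$ with $0<\Lambda(f_1,\dots,f_N)<\infty$ and each $\|f_j\|_{L^{p_j}_{\lambda_j}}$ finite and non-zero, and replace $f_j$ by its dilate $f_j(\cdot/t)$. The left side of \eqref{mainest} transforms by a factor $t^{km}$ (change of variables $x\mapsto tx$ in \eqref{formdef}), while $\|f_j(\cdot/t)\|_{L^{p_j}_{\lambda_j}}=t^{k/p_j+\lambda_j}\|f_j\|_{L^{p_j}_{\lambda_j}}$. Letting $t\to 0$ and $t\to\infty$ forces equality of the exponents, i.e.\ $km=\sum_j(k/p_j+\lambda_j)$, which is \eqref{Scaling}; in particular the hypotheses must be such that a valid test function exists, which one checks directly with, say, Gaussians or bump functions.

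\emph{The subspace conditions \eqref{v1c} and \eqref{v2}.} Fix a non-zero proper subspace $V\subset\reals^m$ (the cases $V=\{0\}$ and $V=\reals^m$ being either trivial or already covered by \eqref{Scaling}). For the vectors $v_j\in V$, the linear functional $x\mapsto v_j\cdot x$ only sees the ``$V$-part'' of $x\in\reals^{km}=(\reals^m)^k$; for $v_j\notin V$ it genuinely involves a complementary direction. One then tests with functions $f_j$ that are a fixed bump on $\reals^k$ when $v_j\in V$ and a dilate $g(\cdot/t)$ of a bump when $v_j\notin V$ (or, to get the weighted inequality \eqref{v2}, with functions supported near the origin at scale $t$). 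Splitting the integration variable $x$ along $V$ and $V^\perp$ and using Fubini, the left side of \eqref{mainest} behaves like $t^{k(m-\dim V)}$ up to a positive finite constant coming from the $v_j\in V$ factors (here one uses that no two $v_j$ are collinear so the relevant sub-form is finite and positive, a fact that can be quoted from the structure set up before the theorem), while the right side picks up $t^{\sum_{v_j\notin V}(k/p_j+\lambda_j)}$. Sending $t\to 0$ yields $\sum_{v_j\notin V}(1/p_j+\lambda_j/k)\ge m-\dim V$, which is \eqref{v1c}. Running the same computation with functions concentrating at the origin and tracking only the weight $|\cdot|^{\lambda_j}$ (which contributes $t^{\lambda_j}$) while the unweighted $L^{p_j}$ norms are scale-invariant gives $\sum_{v_j\notin V}\lambda_j\ge 0$, which is \eqref{v2}.

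\emph{The condition \eqref{I}.} This is the one I expect to be the genuine obstacle, since it is not a pure scaling statement. It should come from testing with functions that are \emph{constant} on a large box — i.e.\ $f_j=\mathbf 1_{Q_R}$ (or a smooth approximation), where $Q_R$ is a cube of side $R$ in $\reals^k$ — and examining the $R\to\infty$ asymptotics. The left side is then $|\{x\in\reals^{km}: v_j\cdot x\in Q_R\ \forall j\}|$, the volume of a slab-like region, which grows like $R^{km}$ times the volume of a fixed polytope cross-section; the right side is of order $\prod_j R^{k/p_j}$ (ignoring the weights, or absorbing them into a comparable constant on the region $|x|\sim R$). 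Matching exponents recovers \eqref{Scaling} again, so to extract \eqref{I} one must instead compare \emph{constants}, not exponents: one takes two of the functions, say $f_1,f_2$, to be indicators of cubes of the \emph{same} side $R$ but translated so that the constraint region degenerates, and shows that if $\sum_j 1/p_j<1$ the form can be made to blow up relative to the product of norms by sending a translation parameter to infinity along the common line $\{v: v\cdot x = 0\ \text{for the other constraints}\}$ — morally, this is the failure of $L^1+L^1\not\hookrightarrow L^1$ under convolution-type pairings. Concretely I would reduce, by freezing all but two of the variables, to a one-dimensional inequality of the form $\int_{\reals} h_1(at)h_2(bt)\,dt \le C\|h_1\|_{p_1}\|h_2\|_{p_2}$ and invoke the elementary fact that this forces $1/p_1+1/p_2\ge 1$, then lift back. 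Getting the bookkeeping of which variables to freeze right, and making sure the frozen integration produces a strictly positive finite factor (again using genericity/non-collinearity), is the delicate part; the rest is routine scaling.
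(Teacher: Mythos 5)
Your overall strategy---testing \eqref{mainest} against one-parameter families of bump functions and reading off exponent inequalities---is the right one, and your treatments of \eqref{Scaling} and (modulo a direction slip) \eqref{v1c} match the paper's. For \eqref{v1c} note that the relevant limit is $t\to\infty$, not $t\to 0$: the test region must be translated out to distance $\approx R$ from the origin along $V^\perp\otimes\reals^k$ (the paper uses $S_R=B(w_1\otimes u,\epsilon)\cap(V\otimes\reals^k)+B(Rw_2\otimes u,\epsilon R)\cap(V^\perp\otimes\reals^k)$, with $w_1\in V$, $w_2\in V^\perp$ produced by a small lemma so that $w_1\cdot v_j\neq0$ for $v_j\in V$ and $w_2\cdot v_j\neq0$ for $v_j\notin V$), and the $f_j$ for $v_j\notin V$ must be supported on the annulus $|y|\approx R$ so that the weighted norm is $\approx R^{k/p_j+\lambda_j}$. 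A bump shrinking toward the origin as $t\to0$ gives the asymptotics, and hence the inequality, in the wrong direction.

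There are two genuine gaps. First, \eqref{v2}: dilation (``functions concentrating at the origin'') cannot isolate $\sum_{v_j\notin V}\lambda_j$, because rescaling changes the unweighted $L^{p_j}$ norms and the size of the form at the same rate as it changes the weights, so you only ever recover combinations of $1/p_j$ and $\lambda_j$. What works is \emph{translation}: keep all supports of fixed size but move the configuration to distance $N$ from the origin in the $V^\perp$ directions (the paper takes $S_N=B((w_1+Nw_2)\otimes u,\epsilon)$ and, for $v_j\notin V$, lets $f_j$ be the indicator of a fixed-radius ball centered at $v_j\cdot w_N\, u$). Then the form stays bounded below by a constant, the unweighted norms are constant, and only the weights scale, as $N^{\lambda_j}$; letting $N\to\infty$ gives \eqref{v2}. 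Second, \eqref{I}: freezing all but two variables can at best produce pairwise conditions $1/p_i+1/p_j\geq1$, and the full condition $\sum_{j=1}^N1/p_j\geq1$ cannot be seen two functions at a time (nor does it follow from such pairwise statements). The paper instead superposes dyadic scales: take $f_j=\sum_\ell \ell^{-(1+\epsilon)/p_j}\,2^{-\ell(k/p_j+\lambda_j)}\chi_{[2^\ell,2^{\ell+1})}(|\cdot|)$, so that $\|f_j\|_{L^{p_j}_{\lambda_j}}\leq C\bigl(\sum_\ell\ell^{-(1+\epsilon)}\bigr)^{1/p_j}<\infty$, while restricting the form to disjoint balls $B(2^\ell w\otimes u,2^\ell\epsilon)$ bounds it below by $\sum_\ell\ell^{-(1+\epsilon)\sum_j1/p_j}$, the powers of $2$ cancelling by \eqref{Scaling}. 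Finiteness of this sum for every $\epsilon>0$ forces $\sum_j1/p_j\geq1$. You should replace your plans for \eqref{v2} and \eqref{I} by these constructions.
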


There is a great deal of work on Brascamp-Lieb forms dating back 
to the original work of 
Brascamp, Lieb, and Luttinger \cite{MR0346109}. 
Two papers that give recent developments and include more 
of the history are by Bennett, Carbery, Christ and Tao 
\cite{MR2661170} and Carlen, Lieb and Loss \cite{MR2077162}. 

A good place to start our story is a paper of 
Barthe \cite[Proposition 3]{MR1650312} who gives a necessary and sufficient 
condition on the indices $ (1/p_1, \dots, 1/p_N)$ for which 
we have estimate \eqref{mainest} in the case when exponents for the weights, 
$ \lambda _j$, are all zero. In fact, he shows that the 
set of indices $(1/p_1, \dots, 1/p_N)$  is the  
matroid polytope for the set of vectors $E=\{v_1, \dots, v_N\}$ 
(though he does not use this terminology). His condition was generalized by 
Bennett {\em et al. }and we make fundamental 
use of these results in our work. Barthe's work is not restricted 
to forms constructed using 
generic sets of vectors and it is an interesting problem to 
characterize the set of weighted spaces for which we have 
\eqref{mainest} without the assumption that the set of vectors $E$ is a generic set. 

A key point in our proof of Theorem \ref{suffthm} is an extension of results  for 
Brascamp-Lieb forms from Lebesgue spaces to Lorentz spaces by a real interpolation 
argument. This idea dates back 
at least to O'Neil \cite{MR0146673} who studies Young's inequality in Lorentz 
spaces and gives an application to fractional integration. 
It reappears in several places including Christ \cite{MR766216}. 
Our work will rely on results of 
 Bez, Lee, Nakamura and Sawano \cite{MR3665018} who study Brascamp-Lieb forms on Lorentz spaces. 

Our interest in Brascamp-Lieb forms arose from the study of a 
scattering map for a first order system in the plane. This map originated in work of Beals and Coifman \cite{BC:1988} and Fokas and Ablowitz \cite{FA:1984} 
who observed that the scattering map  transforms solutions
of the Davey-Stewartson equations to solutions of a linear evolution equation. In the work of the 
first author \cite{MR1871279}, we expand the  scattering map in a 
series of multi-linear expressions and  use estimates for 
certain Brascamp-Lieb forms to establish that the series 
is convergent in a neighborhood of zero in $L^2$. In 
his dissertation \cite{ZN:2009,MR2754810}, Z. Nie 
gave a proof of this result that relied on 
multi-linear interpolation. 
The works of Perry (with an appendix by M.~Christ) \cite{MR3551174} and Brown, Ott and Perry 
(with an appendix by N. Serpico)  \cite{MR3494243} 
give estimates for the map on weighted Sobolev spaces. The recent work  of 
Nachman, Regev, and Tataru \cite{MR4081134} establishes that the map is 
continuous on all of $L^2$, but does not  rely on our approach using Brascamp-Lieb forms. 

Motivated by this body of work and especially the results of Brown, Ott, and Perry,  
we became interested in the question of characterizing the complete set of 
$L^p$-spaces with power weights for which we have the bound \eqref{mainest}.  A first step in 
this direction appears in our work with Lee \cite{MR4198080} where we consider 
forms constructed using generic sets of vectors in $ \reals ^2$. Our results in \cite{MR4198080} are close to optimal in the sense that we 
find a closed polytope  containing the  set of 
indices $(1/p_j, \lambda _j)_{j=1}^N$ for which 
we have \eqref{mainest}, 
and we show that in the interior
of this polytope we have estimate \eqref{mainest}. 
In the current paper, we extend these methods from $ \reals ^2$ to handle forms based 
on generic sets of vectors in $ \reals ^m$ with $m > 2$. 
The polytope we find  lies in a 
hyperplane in $ \reals^{2N}$ and by interior we mean the interior in the relative 
topology for this hyperplane. 

The methods developed over the arc of our works can be used to establish the estimate \eqref{mainest} for certain indices when the set of vectors do not satisfy the generic condition. In fact, going back to the results of Brown, Ott, and Perry \cite{MR3494243}, the form studied there is defined using non-generic sets of vectors and we were successful in finding estimates for the form. However, when considering arbitrary sets of vectors $E\subset \reals^m$, our methods are only successful if we have the additional condition that $E$ is a generic set.  
To be more precise about this limitation, in the last section of this paper we give an example of a form where 
there are points in the interior of the polytope defined by Theorem 
\ref{suffthm} for which a straightforward 
extension of  our argument used in the 
generic case fails to establish \eqref{mainest}. 

Using duality, the estimates for Brascamp-Lieb forms (at least for the range of indices
$p$ that we consider) are equivalent to estimates for multi-linear
operators.  A good place to start
this part of the story is a theorem of Stein and Weiss \cite{MR0098285}, which gives 
optimal results for the action of the Riesz potential  of order $ \beta$ on spaces $L^ p _ \alpha$. 
Using duality, we can see that estimates for this operator are  equivalent to 
weighted estimates for the form 
$$
\int _ { \reals ^ { 2k} } \frac { f(x) g(y)}{|x-y |^ { k- \beta}} \, dx dy. 
$$
In our earlier work \cite{MR4198080} we showed how to obtain the Stein-Weiss result 
from the case $m=2$ of our Theorem on multilinear forms defined using 
generic sets of vectors in $ \reals ^2$.  Work of Grafakos \cite{MR1164632} 
considers an $n$-linear fractional integration operator 
in the unweighted case. Grafakos's operator may be related to a form 
constructed using a set of generic vectors in $ \reals ^2$. 
Grafakos and Kalton \cite[Section 5]{MR1812822}
consider a multi-linear form involving 
a set of vectors that is not generic. 

Kenig and Stein \cite{MR1682725} consider a multilinear fractional integral 
and give results when their operator maps into an $L^p$-space with $p<1$. This 
is a result that cannot be obtained by using duality and appealing to estimates 
for forms.  There are a number of authors who have considered weighted estimates 
for bilinear fractional integrals. The author Komori-Furuya \cite{MR4047697} has 
studied these operators on the spaces $L^p_\alpha$ we consider here. Moen 
\cite{MR3130311} gives conditions on general weights which guarantee boundedness 
of a bilinear fractional integral.  In comparison, our work considers less general 
weights, but gives results that are close to optimal.  An interesting open 
problem is to find conditions on weights that allow us to establish the 
finiteness of Brascamp-Lieb forms for more general classes of weighted 
spaces. 

The outline of this paper is as follows. In section \ref{SectionSuff} we prove
the conditions which guarantee the finiteness of the form  and \eqref{mainest}. Section \ref{SectionNec} 
is devoted to the proofs of the necessary conditions and section \ref{examples} gives several examples 
illustrating our results and the limitations of our methods. 

We thank our colleague, C.W.~Lee for suggesting that we study Brascamp-Lieb
forms constructed using generic sets of vectors.

\section{Sufficient conditions}\label{SectionSuff}
In this section, we give the proof that the conditions in Theorem \ref{suffthm} are sufficient to 
imply the estimate \eqref{mainest}.  
The proof will proceed in four steps of
increasing complexity. First, we prove the theorem in the case when $\lambda_j=0$
for all $j$ and $k=1$. Next, we remove the restriction on $k$.   The third step is to  prove 
the result under the assumption that all $\lambda_j\geq 0$. The 
final step is to show that when  $\lambda_j<0$ for at least one index  
$j$  we may reduce to the case where all $ \lambda _j \geq 0$.

Before beginning the proof, we recall a few facts about the Lorentz spaces, $L^{p,r}$, where $1\leq p < \infty$ and $1\leq r \leq \infty$. A definition of
these spaces  may be 
found in Stein and Weiss \cite[p.~191]{SW:1971}. This family of spaces include the familiar $L^p$-spaces
as $ L^p = L^ { p,p}$ and arise as real interpolation spaces of the $L^p$-spaces.  
An observation of Calder\'on is that  the spaces $L^{p,r}$
may be normed if $1 <p < \infty$ and $ 1\leq r\leq \infty$ 
(and of course when $p=r=\infty$,  $L^{\infty, \infty} = L^\infty$). 
Imitating the definition of $L^p_\alpha$, we  define weighted Lorentz spaces $L^{p,r}_\alpha$ as the collection of functions $f$ for which  $|\cdot |^ \alpha f$ lies in $L^{p,r}$. 

 We  recall the extension of H\"older's inequality to Lorentz 
 spaces due to O'Neil \cite[Theorem 3.4]{MR0146673}. Suppose $p \in (1,\infty)$, 
 $p_1, p_2 \in (1, \infty)$, $ 1/p = 1/p_1 + 1/p_2$, $r, r_1, r_2 \in [1,\infty]$,
and $1/r\leq 1/r_1 + 1/r_2 $. Then there is a finite constant $C$ such that 
\begin{equation} \label{LorentzHolder}
\| f_1f_2 \|_{L^{p,r}} \leq C\| f_1 \|_{ L^ { p_1,r_1}} \|f_2 \| _{ L^{ p_2, r_2}}.
\end{equation}
One useful observation  is that  for $ 0 < \lambda < k$, 
 the function $|x|^ { -\lambda }$ lies in the space $L^{ \lambda /k, \infty}( \reals ^k)$ 
 and in fact if $ 0 < \lambda - \alpha <k$ we have  
 \begin{equation} \label{LorPower}
 |x|^ { -\lambda} \in L^ { k/(\lambda - \alpha), \infty}_\alpha( \reals ^k).
 \end{equation}

We begin with the case when  $k=1$ and  $\lambda_j= 0$ for all
$j$. Under this restriction, provided  that \eqref{Scaling} and \eqref{v1} hold, 
the theorem of Bennett, Carbery, Christ, and Tao \cite[Theorem 2.1]{MR2661170}
implies that \eqref{mainest} holds.  Next, Proposition 2.1 from our previous work
\cite{MR4198080} shows that the boundedness of the form for $k >1$ follows 
from the case where $k=1$. From here, a result of Bez {\em et~al.~}\cite[Theorem 1]{MR3665018} allows us
to conclude that we have an estimate for the form in Lorentz spaces. In particular, if the indices 
$(1/p_j, 0) _{j=1}^N \in (0,1)^N \times \{0\}^N$ satisfy
\eqref{Scaling}, \eqref{v1} and the indices $ (1/r_1, \dots, 1/r_N)$ satisfy 
$ \sum _ { j=1} ^N 1/r_j\geq 1$, then  we have
\begin{equation}
\label{Bez}
\Lambda(f_1, \dots, f_N) \leq C 
\prod_{ j=1} ^N \|f_j \| _{ L^{ p_j, r_j}}. 
\end{equation}
The result of Bez {\em et~al.~}depends on a multi-linear interpolation 
argument which may be found in work  of M.~Christ \cite{MR766216} or S.~Janson \cite{SJ:1986}. 

Next, suppose that $\lambda_j\geq 0$ for all $j$. Set
$$\frac{1}{r_j} = \frac{1}{p_j}+\frac {\lambda_j} k,$$
and observe that for the one-dimensional space  $\reals  v_j $ we have
$$\big\{v_i: v_i \notin \reals v_j \big\}=\big\{v_1,\dots,v_N\big\}\setminus\{v_j\}.
$$
In this case, the conditions \eqref{Scaling} and \eqref{v1} imply
\begin{equation}  \label{NotOne}
 \frac{1}{p_j}+ \frac {\lambda_j} k = m-\sum_{i\neq j} (\frac{1}{p_i}+\frac {\lambda_i} k )<
m-(m-1)=1.
\end{equation}
Also from the assumptions that $1/p_j \in (0,1)$ and $\lambda_j\geq 0$ for 
all $j$, it immediately follows that $\frac{1}{p_j}+\frac {\lambda_j} k>0$. Finally, employing 
\eqref{Bez}, 
the 
generalized H\"older inequality \eqref{LorentzHolder} and the observation 
\eqref{LorPower} (with $\alpha =0$) 
we have
\begin{align*}
\Lambda(f_1, \dots,f_N) &\leq C\prod_{j=1}^N \|f_j\|_{L^{r_j,p_j}}\\
&\leq C\prod_{j=1}^N \|f_j|\cdot|^{\lambda_j}\|_{L^{p_j,p_j}}
\| |\cdot|^{-\lambda_j} \|_{L^{k/\lambda_j,\infty}}  = C\prod_{j=1}^N \|f_j\|_{L^{p_j}_{\lambda_j}}
\end{align*}
which gives \eqref{mainest} in the case that all $ \lambda _j \geq 0$. 
\comment{Need to verify that we are in the interior of the set from BCCT.}

The next step is to prove estimate \eqref{mainest} when $\lambda_j<0$ for some $j$.
We will proceed by induction on the number of indices $j$ for which $ \lambda _j <0 $. The 
following technical Lemma gives most of the details of the induction step. 

\begin{lemma} \label{InductionStep}
Given a family $({1}/{p_j},\lambda_j)_{j=1} ^N$ 
satisfying (\ref{Scaling}--\ref{I}), 
and $1/p_j\in(0,1)$, with at least one $j$ with 
$\lambda_{j} <0$, let $j_0 $ be an index so that 
$ \lambda _{ j _0 } = \min \{ \lambda _j : j=1, \dots , N\}$. We may find a finite family
$ \{\beta^{(\alpha)}\}_{\alpha \in \zeta}$ 
so that $({1}/{p_j},\beta_j^{(\alpha)})_{j=1}^N$ satisfy (\ref{Scaling}--\ref{I}).
The vectors of exponents $\beta^{(\alpha)}$ are indexed by a family of 
multi-indices $\zeta \subset \{0, 1, \dots, N\}^{\ell-m+1}$ where $\ell$ is the 
number of positive entries in $\lambda$. 
Moreover, $({1}/{p_j},\beta_j^{(\alpha)})$
satisfy
\begin{align}
& \beta_{j_0}^{(\alpha)}=0 \quad \mbox{for all }\, \alpha\in \zeta,\\
& \beta_j^{(\alpha)} = \lambda_j\quad \mbox{if }\, 
\lambda_j < 0\mbox{ and }j \neq j_0,\\
& 0 \leq \beta^{(\alpha)}_j\leq \lambda_j \quad \mbox{if }\, \lambda_j \geq 0.
\end{align}
Also, we have 
\begin{align}\label{estimate2}
\Lambda(f_1,\dots,f_N) \leq C \sum_{\alpha\in \zeta}
\Lambda(|\cdot|^{\lambda_1-\beta_1^{(\alpha)}} f_1, \dots, |\cdot|^{\lambda_N-\beta_N^{(\alpha)}}f_N).
\end{align}
\end{lemma}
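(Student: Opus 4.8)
The plan is to isolate the most negative weight exponent $\lambda_{j_0}$ and redistribute it among the indices carrying positive weight, using the scaling constraint \eqref{Scaling} to control the bookkeeping. First I would set $\mu = -\lambda_{j_0} > 0$ and write $|\cdot|^{\lambda_{j_0}} = |\cdot|^{-\mu}$ at the $j_0$-th slot. The key algebraic device is a partition-of-unity/splitting identity for the negative power: since $v_{j_0}\cdot x = \sum_i v_{j_0}^i x^i$ and the other vectors span, one can bound $|v_{j_0}\cdot x|^{-\mu}$ pointwise by a sum of products $\prod_{i} |v_i\cdot x|^{-\gamma_i}$ over finitely many choices of nonnegative exponents $\gamma_i$ with $\sum_i \gamma_i = \mu$ and each $\gamma_i \le \lambda_i$ (so that only the originally-positive slots absorb mass). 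The combinatorial index set for these choices is exactly the family $\zeta$ of multi-indices; the count $\ell - m + 1$ in the exponent of $\{0,1,\dots,N\}^{\ell-m+1}$ reflects that once $m-1$ of the "directions" are fixed the remaining weight must be spread over the $\ell$ positive slots, giving $\ell - (m-1)$ degrees of freedom. I would construct this splitting by an explicit elementary estimate — on the region where $|v_{j_0}\cdot x|$ is comparable to the largest $|v_i\cdot x|$, dominate $|v_{j_0}\cdot x|^{-\mu}$ by $|v_i\cdot x|^{-\mu}$ and then further split that single negative power into the required product form by iterating a two-term inequality $|t|^{-a-b}\lesssim |s|^{-a}|t|^{-b} + \dots$ — but at the level of this plan I would simply assert the existence of such a finite decomposition.

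Given the pointwise bound on $|v_{j_0}\cdot x|^{-\mu}$, multiplying through the integrand in \eqref{formdef} and inserting $|\cdot|^{\lambda_j}$ factors produces \eqref{estimate2}: each term in the sum over $\alpha$ is a form $\Lambda(|\cdot|^{\lambda_1 - \beta_1^{(\alpha)}}f_1,\dots)$, where $\beta_j^{(\alpha)}$ is defined by $\beta_{j_0}^{(\alpha)} = 0$, $\beta_j^{(\alpha)} = \lambda_j$ for the other negative indices (which are untouched), and $\beta_j^{(\alpha)} = \lambda_j - \gamma_j^{(\alpha)} \in [0,\lambda_j]$ for the positive indices. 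This gives the three displayed properties of $\beta^{(\alpha)}$ by construction.

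It then remains to verify that each family $(1/p_j, \beta_j^{(\alpha)})_{j=1}^N$ still satisfies \eqref{Scaling}--\eqref{I}. For \eqref{Scaling}: $\sum_j \beta_j^{(\alpha)} = \sum_j \lambda_j - (-\lambda_{j_0}) - \sum_{i} \gamma_i^{(\alpha)} = \sum_j \lambda_j$ since $\sum_i \gamma_i^{(\alpha)} = \mu = -\lambda_{j_0}$ — wait, more carefully, $\sum_j \beta_j^{(\alpha)} = \sum_j \lambda_j + \mu - \sum_i \gamma_i^{(\alpha)} = \sum_j \lambda_j$, so \eqref{Scaling} is preserved; \eqref{I} is untouched since the $1/p_j$ do not change. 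For \eqref{v2}, $\sum_{v_j\notin V}\beta_j^{(\alpha)} \ge 0$: on any subspace $V$, replacing $\lambda$ by $\beta^{(\alpha)}$ only moves weight from $j_0$ to other indices and only within originally-nonnegative slots, and one checks the sum outside $V$ cannot decrease below zero — this uses that the $\gamma_i$ are supported where $\lambda_i \ge 0$, hence $\beta_i^{(\alpha)} \ge 0$ there, while the genuinely negative slots are unchanged, so the only way $\sum_{v_j \notin V}\beta_j^{(\alpha)}$ could drop is via removing $j_0$'s (now zero) contribution and adding nonnegative ones. For \eqref{v1}: since $1/p_j + \beta_j^{(\alpha)}/k \le 1/p_j + \lambda_j/k$ for positive slots and equals it for negative ones while at $j_0$ it strictly decreases, I would need the splitting to be built so that the redistributed $\gamma_i$'s are small enough that the strict inequality \eqref{v1} survives — this is the point where the freedom in choosing $\zeta$ (and possibly shrinking the $\gamma_i$'s, or choosing the partition so each $\beta^{(\alpha)}$ lands in the interior) must be exploited.

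The main obstacle I anticipate is precisely this last verification: constructing the decomposition of $|v_{j_0}\cdot x|^{-\mu}$ so that \emph{every} resulting $\beta^{(\alpha)}$ satisfies the \emph{strict} inequalities \eqref{v1} simultaneously for all proper nonzero subspaces $V$. The pointwise splitting itself is elementary, and \eqref{Scaling}, \eqref{I}, \eqref{v2} are routine once the supports of the $\gamma_i$ are pinned down; but keeping \eqref{v1} strict for all $V$ at once is a genuine convex-geometry argument — it is essentially the statement that the original interior point $(1/p_j,\lambda_j)$ can be written as a (finite) convex-type combination of interior points $(1/p_j,\beta^{(\alpha)}_j)$ with the $j_0$ coordinate zeroed out, which should follow from the polytope structure described after Theorem \ref{necthm} but will require care to make effective.
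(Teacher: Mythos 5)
Your overall architecture matches the paper's: isolate the most negative exponent, transfer it onto the positive slots by a pointwise inequality coming from the spanning property, and then check conditions (\ref{Scaling}--\ref{I}) for each resulting $\beta^{(\alpha)}$. However, there are two genuine gaps. First, the key pointwise inequality you assert is stated in the wrong direction and is false: the spanning property gives $|v_{j_0}\cdot x|\leq C\max_i|v_i\cdot x|$, i.e.\ an \emph{upper} bound on $|v_{j_0}\cdot x|$, so it cannot yield an upper bound on $|v_{j_0}\cdot x|^{-\mu}$ by products $\prod_i|v_i\cdot x|^{-\gamma_i}$ (take $x$ near the hyperplane $v_{j_0}\cdot x=0$ with the other $|v_i\cdot x|$ bounded below: the left side blows up while the right side stays bounded). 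Your own "comparable to the largest" reduction only covers part of the space. What is actually needed, and what the paper uses, is the \emph{positive}-power inequality $|x\cdot v_N|^{\gamma_0}\leq C\sum_{j=1}^m|x\cdot v_j|^{\gamma_0}$ for a basis $\{v_1,\dots,v_m\}$ (here genericity of $E$ enters, which you never invoke), inserted as $f_N=\frac{|x\cdot v_N|^{\gamma_0}}{|x\cdot v_N|^{\gamma_0}}f_N$ as in \eqref{Spread}; each single application transfers only $\gamma_0=\min(\lambda_m,|\lambda_N|)$ onto \emph{one} slot at a time, and the multi-slot splittings you describe arise only by iterating this (a product of sums, each sum over a spanning set), not by an arbitrary choice of $\gamma_i$ with $\sum_i\gamma_i=\mu$ and $\gamma_i\le\lambda_i$. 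That the iteration terminates also requires the a priori bound $|\lambda_N|\le\sum_{i=m}^{\ell}\lambda_i$, which must be derived from \eqref{v2} (the paper gets it from \eqref{detourA} with $K=\{v_1,\dots,v_{m-1}\}$); you do not establish that the requested splitting exists at all.

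Second, you explicitly defer the verification that each $\beta^{(\alpha)}$ still satisfies the strict inequalities \eqref{v1} and the inequalities \eqref{v2}, calling it "a genuine convex-geometry argument ... requiring care." This is the bulk of the actual proof, not a routine afterthought. The paper handles it by first reducing the subspace conditions to conditions on finite subsets $K\subset E$ via genericity (Lemmas \ref{detour} and \ref{detour2}), and then running a four-case analysis according to whether $v_{j_1}$ and $v_N$ lie in $K$; the choice $\gamma_0=\min(\lambda_m,|\lambda_N|)$ is exactly what makes the critical cases ($\beta^{(j_1,0)}_{j_1}=\lambda_{j_1}-\gamma_0\ge 0$, and $1/p_j+\beta_j^{(j_1,0)}/k<1$ for $v_j\in K$) go through. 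Without supplying this verification and the correct, directionally valid pointwise inequality, the proposal does not constitute a proof.
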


If we grant the Lemma, then the proof by induction is quite easy. 
We have already established the  base 
 case  when all the exponents $ \lambda _j $ are non-negative. 
To proceed by induction, we assume that we have Theorem \ref{suffthm} 
when $J$ of the exponents $ \lambda_j$ are negative. If $ \lambda$ is 
a vector of exponents with $ J+1$ 
negative entries and so that $ (1/p_j, \lambda _j ) _ { j=1}^N$ satisfy the conditions of Theorem \ref{suffthm}, then 
using the family $\{ \beta^{(\alpha )} : \alpha \in \zeta \}$ from 
Lemma \ref{InductionStep}, we have
\begin{align*}
\Lambda ( f_1, \dots, f_N) &\leq \sum _ { \alpha \in \zeta } 
\Lambda(|\cdot|^{\lambda_1-\beta_1^{(\alpha)}}f_1,\dots,|\cdot|^{\lambda_N-\beta_N^{(\alpha)}}f_N) \\
& \leq C \sum _ { \alpha \in \zeta } 
\prod_{j=1}^N \||\cdot|^{\lambda_j-\beta_j^{(\alpha)}}f_j
 \|_{L^{p_j}_{\beta_j^{(\alpha)}}} \nonumber 
  = C\prod_{j=1}^N\|f_j\|_{L^{p_j}_{\lambda_j}}.
\end{align*}
Thus, establishing  Lemma \ref{InductionStep}  will  complete the proof of Theorem \ref{suffthm}. 
To prove Lemma \ref{InductionStep}, we will make use of the following lemmata which 
give alternative characterizations of \eqref{v1} and \eqref{v2} in the case of generic sets of vectors. 

\begin{lemma}\label{detour}
Let $E= \{v_1, \dots,v_N\}$ be a generic set in $\reals^m$ and 
%
%
let $(1/p_j, \lambda_j ) _{ j=1}^N \in (0,1)^N \times \reals ^N$. 
The following conditions on these indices are equivalent:
\begin{align}
&\sum_{v_{j}\notin W }\lambda_j \geq 0 \quad
&&\mbox{for all proper  subspaces $W\subset  \reals ^m $},\label{detourB} \\
&\sum_{v_j\notin K} \lambda_j \geq 0 \quad 
&&\mbox{for all } K\subset E
\,\mbox{such that }\, \#K \leq m-1.\label{detourA}
\end{align}
\end{lemma}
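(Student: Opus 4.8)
The plan is to set up a dictionary, available precisely because $E$ is generic, between proper subspaces of $\reals^m$ and subsets $K\subseteq E$ with $\#K\le m-1$, under which the two conditions become identical term by term. The two facts I would record first are: \emph{(i)} if $K\subseteq E$ and $\#K\le m-1$, then $\spn K$ is a proper subspace and $E\cap \spn K=K$; and \emph{(ii)} if $W\subsetneq\reals^m$ is a proper subspace, then $\#(E\cap W)\le m-1$. Both are immediate consequences of genericity, whose content here is that every subset of $E$ of cardinality at most $m$ is linearly independent (it is, or extends to, an $m$-element basis). For \emph{(i)}: $\dim\spn K\le \#K\le m-1$, so $\spn K$ is proper, and if some $v\in E\setminus K$ lay in $\spn K$ then $K\cup\{v\}$ would be a linearly dependent subset of $E$ of size at most $m$, which is impossible. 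For \emph{(ii)}: if $E\cap W$ had $m$ or more elements, $m$ of them would form a basis of $\reals^m$ contained in $W$, forcing $W=\reals^m$.

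Granting \emph{(i)} and \emph{(ii)}, each implication is one line, because the index sets of the two sums literally coincide. For $\eqref{detourB}\Rightarrow\eqref{detourA}$: given $K\subseteq E$ with $\#K\le m-1$, take $W=\spn K$; by \emph{(i)} this $W$ is proper and $\{v_j:v_j\notin W\}=\{v_j:v_j\notin K\}$, so the instance of \eqref{detourB} at $W$ is verbatim the instance of \eqref{detourA} at $K$. For $\eqref{detourA}\Rightarrow\eqref{detourB}$: given a proper subspace $W$, take $K=E\cap W$; by \emph{(ii)} we have $\#K\le m-1$, and trivially $\{v_j:v_j\notin W\}=\{v_j:v_j\notin K\}$, so \eqref{detourA} at $K$ yields \eqref{detourB} at $W$. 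The degenerate cases are consistent, since $\spn\emptyset=\{0\}$, $E\cap\{0\}=\emptyset$ (recall $0\notin E$), and the subspace $\reals^m$ itself contributes only a vacuous empty sum.

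The only real content is the pair of geometric facts \emph{(i)}--\emph{(ii)}; once they are in hand there is nothing to estimate, since the equivalence reduces to matching summation ranges. Accordingly, the ``main obstacle'' is simply making the genericity argument for \emph{(i)}--\emph{(ii)} airtight (in particular noting that $\spn K$ contains no element of $E$ beyond those in $K$), together with a harmless bookkeeping remark that ``proper subspace'' in \eqref{detourB} is read to allow $W=\{0\}$, matched by $K=\emptyset$ on the other side.
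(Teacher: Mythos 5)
Your proof is correct and follows essentially the same route as the paper's: both directions are handled by the same dictionary $K\mapsto\spn K$ and $W\mapsto E\cap W$, with genericity guaranteeing $E\cap\spn K=K$ in one direction and $\#(E\cap W)\le m-1$ in the other. The paper's version of your fact \emph{(ii)} is the marginally sharper bound $\#(E\cap W)\le\dim W$, but the argument is otherwise identical.
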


\begin{proof}
First we show that \eqref{detourA} implies \eqref{detourB}. Given $W$ a proper 
subspace of $\reals^m$, set $K =\{v_\ell: v_{\ell}\in W \}$. Then
$\#K\leq \dim W \leq m-1$ and from \eqref{detourA} we obtain \eqref{detourB}
\[
\sum_{ v_{i} \notin W} \lambda_i =\sum_{v_i\notin K} \lambda_i \geq 0.
\]

For the other implication, assume that \eqref{detourB} holds. Given $K$, set 
$
W = \spn (K) 
$
which will be a proper subspace of $\reals ^m$ since $ \# K \leq m-1$.
Thanks to our assumption that the set $E= \{ v_1, \dots ,v_N\} $ is generic 
and that $\# K \leq m-1$, it follows that $ W \cap E = K$.
Then it follows that 
$$
\sum_{v_i\notin K} \lambda_i = \sum_{ v_i\notin W } \lambda_i\geq 0.
$$
\end{proof}

In our next Lemma, we need to avoid the cases when $K$ is empty or all of $E$ and 
we will have equality
in \eqref{conditionD}. 
\begin{lemma}\label{detour2}
Let $E= \{v_1,\dots,v_N\}$ be a generic set in $\reals^m$ and suppose 
$ ( 1/p_j, \lambda _j ) _ { j =1}^N \in (0,1)^N \times \reals ^N$
Then the following two statements are equivalent:
\begin{align}
& \sum_{v_j \notin V } (\frac{1}{p_j}+ \frac {\lambda_j}k  )> m- \dim(V) 
\quad\mbox{for all  subspaces  $V$ with $1 \leq  \dim(V) \leq m-1 $}
\label{conditionC} \\
& \sum_{ v_j\notin K} (\frac{1}{p_j} + \frac {\lambda_j} k ) > m-\#K \quad 
\mbox{for all }\, K\subset
E
\,\mbox{with } 1 \leq  \#K\leq m-1. \label{conditionD}
\end{align}
\end{lemma}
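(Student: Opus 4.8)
The plan is to follow the template of the proof of Lemma~\ref{detour}, exploiting that for a \emph{generic} set $E$ a subspace $V$ and its trace $V\cap E$ carry the same information about the sums appearing in \eqref{conditionC} and \eqref{conditionD}, together with one extra device to handle subspaces that miss $E$ entirely.

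For the implication \eqref{conditionC}$\Rightarrow$\eqref{conditionD}, given $K\subset E$ with $1\le\#K\le m-1$ I would set $V=\spn(K)$. Since $\#K<m$ and $E$ is generic, $K$ is linearly independent, so $1\le\dim V=\#K\le m-1$; and, exactly as in Lemma~\ref{detour}, genericity forces $V\cap E=K$, because any $v_i\in V\setminus K$ would make $K\cup\{v_i\}$ a linearly dependent subset of $E$ of size at most $m$. Hence $\{v_j:v_j\notin V\}=\{v_j:v_j\notin K\}$ and $m-\dim V=m-\#K$, so \eqref{conditionC} applied to $V$ is precisely \eqref{conditionD} for $K$.

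For the reverse implication I would fix a subspace $V$ with $d:=\dim V$, $1\le d\le m-1$, and split into two cases. If $V\cap E\neq\emptyset$, set $K=V\cap E$. Genericity gives $1\le\#K\le d$, since $d+1$ distinct vectors of $E$ are linearly independent (a subset of size $\le m$) and so cannot lie in the $d$-dimensional space $V$. Thus \eqref{conditionD} applies to $K$, and because $\{v_j\notin V\}=\{v_j\notin K\}$ and $\#K\le d$ we get $\sum_{v_j\notin V}(\frac1{p_j}+\frac{\lambda_j}{k})>m-\#K\ge m-d$, which is \eqref{conditionC} for $V$.

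The remaining case $V\cap E=\emptyset$ is the one flagged in the remark preceding the lemma and is the only genuine obstacle, since then $\spn(V\cap E)=\{0\}$, which is excluded from \eqref{conditionD}. Here I would instead average \eqref{conditionD} over all $K\subset E$ with $\#K=d$; such $K$ are admissible because $1\le d\le m-1$, and they exist since $N\ge m>d$. Each satisfies $\sum_{v_j\notin K}(\frac1{p_j}+\frac{\lambda_j}{k})>m-d$, and summing over the $\binom{N}{d}$ choices of $K$, noting that a fixed index $j$ is omitted by exactly $\binom{N-1}{d}$ of them, gives $\binom{N-1}{d}\sum_{j=1}^N(\frac1{p_j}+\frac{\lambda_j}{k})>\binom{N}{d}(m-d)$, hence $\sum_{j=1}^N(\frac1{p_j}+\frac{\lambda_j}{k})>\frac{N}{N-d}(m-d)>m-d=m-\dim V$. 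Since $V$ contains no $v_j$, the left-hand side equals $\sum_{v_j\notin V}(\frac1{p_j}+\frac{\lambda_j}{k})$, so \eqref{conditionC} holds for $V$. The only points requiring care are the genericity input $\#(V\cap E)\le\dim V$ and the elementary binomial bookkeeping in this last averaging step; everything else runs parallel to Lemma~\ref{detour}.
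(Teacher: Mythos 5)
Your argument is correct and, in its overall structure, matches the paper's: both directions rest on the fact that for a generic set the maps $K\mapsto\spn(K)$ and $V\mapsto V\cap E$ preserve the relevant index sets, with $\dim\spn(K)=\#K$ and $\#(V\cap E)\le\dim(V)$, so that \eqref{conditionC} and \eqref{conditionD} translate into one another. The only point of divergence is the sub-case $V\cap E=\emptyset$: you average \eqref{conditionD} over all $\binom{N}{d}$ subsets of size $d=\dim V$ and use $\binom{N}{d}/\binom{N-1}{d}=N/(N-d)>1$, whereas the paper first applies \eqref{conditionD} with $K=\{v_1\}$ to deduce that some index $j_0$ has $1/p_{j_0}+\lambda_{j_0}/k\ge 0$, and then applies \eqref{conditionD} with $K=\{v_{j_0}\}$, so that discarding the $j_0$ term from the full sum only decreases it. Both devices are sound; yours is purely combinatorial and avoids singling out a nonnegative term, but tacitly requires $N\ge d$ for each $d\le m-1$ (harmless in context, where the lemma is applied with $N>m$), while the paper's needs only a single vector of $E$.
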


\begin{proof}
First, to prove that \eqref{conditionC} implies \eqref{conditionD}, let 
$K \subset E$
with $1 \leq \# K \leq m-1$.  Let $V = \spn (K)$ and use the assumption that $E$
is a generic set to conclude  $\dim(V)=\#K$  and 
$\{v_j: v_j\notin V \} = E \setminus K$
Thus
\begin{align*}
& \sum_{v_j\notin K} ( \frac{1}{p_j}+\frac {\lambda_j} k ) = \sum_{v_j\notin V }
(\frac{1}{p_j}+\frac {\lambda_j} k )>m- \dim(V) = m-\#K.
\end{align*}

Now suppose that we have \eqref{conditionD} and  
let $V\subset\reals^m$ be a proper subspace. Set $K = \{v_j:
v_j\in V \}$. By the generic condition, 
$
 \#K \leq \dim(V) 
$
which implies  that $m-\dim( V)\leq m-\#K$.
Thus in the case that $ \# K \geq 1$, \eqref{conditionC} follows from 
\eqref{conditionD}.   If $ \# K =0$ and $ \dim V \geq 1$, we may use 
\eqref{conditionD} for the set $ K = \{ v_1\}$ and find that there is at 
least one $j$ for which $1/p _{j_0} + \lambda _{j_0} /k \geq 0$.
Then 
$$\sum _ { j=1}^N (\frac 1 { p_j } + \frac { \lambda _j } k)
\geq \sum _ { j\neq j_0} (\frac 1 { p_j } + \frac { \lambda _j } k)
> m-1 \geq m -\dim V.
$$
Thus \eqref{conditionC} follows from \eqref{conditionD}.
\end{proof}

\begin{proof}[Proof of Lemma \ref{InductionStep}]
For convenience, we assume that the vectors $v_j$ are labeled so that the exponents $\lambda $ are 
in decreasing order 
and $\ell$ is the last index for which $ \lambda _\ell >0 $: 
$$ \lambda_1\geq \dots \geq \lambda_\ell >0 
\geq \lambda_{\ell+1} \geq \dots \geq \lambda_N.$$
With this re-indexing our goal is find a 
family $\{ \beta ^ { (\alpha )} \}$ with $\beta^{ ( \alpha )}_N =0$. 
To begin, we use \eqref{detourA} with $K= \{ v_1, \dots, v_{m-1}\}$ to obtain that 
\begin{align}
\lambda_N+\sum_{i=m}^\ell\lambda_{i} \geq \sum_{i=m}^N\lambda_i\geq 0.
\end{align}
Thus it follows that
\begin{align}\label{step}
|\lambda_N|\leq \sum_{i=m}^\ell \lambda_i.
\end{align}
Now we will proceed by giving a construction involving at most $\ell-m+1$ steps to 
produce the family $\{\beta^{(\alpha)}\}\subset\reals^N$. The vectors 
$\beta^{(\alpha)}$ will be indexed by multi-indices $\alpha\in\{0,1,\dots,N\}^{\ell-m+1}$.

To begin, we set $\beta^{(0)}=\lambda$ and then set
$$
\gamma_0=\min(\lambda_1,\dots,\lambda_m,|\lambda_N|)=\min(\lambda_m, |\lambda_N|).
$$
Now define
\begin{align*}
\beta^{(j_1,0)}& =(\lambda_1,\dots,\lambda_{j_1}-\gamma_0,\dots,\lambda_{m+1},\dots,
\lambda_N+\gamma_0) \nonumber \\
& = \lambda -\gamma_0e_{j_1}+\gamma_0 e_N,
\end{align*}
and set $\zeta_1=\{\beta^{(j_1,0)} : j_1 =1,\dots,m\}.$ 
It is clear that for each $\alpha\in \zeta_1$,
$({1}/{p_j},\beta_j^{(\alpha)})_{j=1}^N$ satisfies \eqref{Scaling}
and 
\eqref{I}. It  remains to show that \eqref{v1}
and \eqref{v2} hold.

We will use Lemma \ref{detour} to show 
that $\beta^{(j_1,0)}$ satisfies \eqref{v2}. Thus let $K \subset E$ be a set
with $\#K \leq m-1$ and  consider the 
four cases according to whether or not $v_{j_1}$ and $v_N$ lie in $K$.
If both $v_{j_1}, v_N$ lie in $K$, or both lie in $E\setminus K$,
then 
\begin{align*}
\sum_{v_j\notin K}\beta_j^{(j_1,0)}=\sum_{v_j\notin K}\lambda_j \geq 0.
\end{align*}
If $v_N\notin K$ and $v_{j_1}\in K$, then
\begin{align*}
\sum_{v_j\notin K}\beta_j^{(j_1,0)} = \gamma_0+\sum_{v_j\notin K}\lambda_j \geq 
\gamma_0+0 \geq 0.
\end{align*}
Finally, the most interesting case is when $v_N\in K$ and $v_{j_1}\notin K$. 
In this scenario,
\begin{align*}
\sum_{v_j\notin K} \beta_j^{(j_1,0)}=\lambda_{j_1}-\gamma_0-\lambda_N+
\sum_{v_j\notin ((K\setminus\{v_N\})\cup\{v_{j_1}\})}\lambda_j \geq 0.
\end{align*}
The inequality above holds since $\lambda_{j_1}-\gamma_0\geq 0$ and $-\lambda_N\geq 0$.

Thus we have verified that \eqref{v2} holds for $\beta^{(j_1,0)}$, and
now we proceed with a similar argument using Lemma \ref{detour2} to show that \eqref{v1} is also true.
For this, again we consider four cases. 

The  cases $\{v_{j_1},v_N\}\subset K$ and
$\{v_{j_1},v_N\}\subset K^c$ are
straightforward since in these cases we have: 
\begin{align*}
\sum_{v_j\notin K} (\frac{1}{p_j}+\frac 
{\beta_j^{(j_1,0)}}k )=
\sum_{v_j\notin K} ( \frac{1}{p_j}+\frac {\lambda_j}k)
\geq m - \# K.
\end{align*}
In the third case, $v_{j_1}\in K, v_N
\notin K$, and
\begin{equation*}
\sum_{v_j\notin K} (\frac{1}{p_j}+\frac {\beta_j^{(j_1,0)}}k ) =\frac {\gamma_0}k+\sum_{v_j\notin K}
(\frac{1}{p_j}+ \frac {\lambda_j } k ) 
 > \gamma_0+m-\#K.
\end{equation*}
Finally, if $v_{j_1}\notin K$ and $v_N\in K$, 
\begin{equation*}
\sum_{v_j \notin K} (\frac{1}{p_j}+\frac {\beta_j^{(j_1,0)} }  k )
=m-\sum_{v_j\in K} (\frac{1}{p_j}+\frac {\beta_j^{(j_1,0)} } k )
 >m-\#K.
\end{equation*}
The last inequality above follows if we can prove the claim that 
\begin{align}\label{claimsm}
\frac{1}{p_j}+\frac {\beta_j^{(j_1,0)}} k <1\quad \mbox{for }\, v_j\in K.
\end{align}

To see that \eqref{claimsm} holds, consider two cases: $v_j\in K\setminus\{v_N\}$, 
and $v_j = v_N$. In the first case,
\begin{align*}
\frac{1}{p_j}+\frac {\beta_j^{(j_1,0)}} k  &\leq\frac{1}{p_j }+  \frac {\lambda_j} k  \nonumber \\
& =\sum_{i=1}^N(\frac{1}{p_i}+\frac {\lambda_i }k )-\sum_{i\neq j}(\frac{1}{p_i}+\frac { \lambda_i } k  )
\\
&<m-(m-1) = 1.
\end{align*}
%
The last line above follows from \eqref{Scaling} and \eqref{v1}.
In the second case, when $j=N$, 
\begin{align*}
\frac{1}{p_N}+\frac {\beta_N^{(j_1,0)}} k\leq \frac{1}{p_N}<1,
\end{align*}
since $\beta_N^{(j_1,0)}\leq 0$ and we assume that ${1}/{p_N}<1$.

Having verified \eqref{v1} for each $\beta^{(j_1,0)}\in \zeta_1$, we move on to
the task of establishing the inequality \eqref{estimate2}. Using our assumption that the set 
$E=\{v_1, \dots, v_N\}$ is generic, we have that 
$\{v_1,\dots,v_m\}$ is a basis for $\reals^m$. Therefore we can express 
$v_N=\sum_{j=1}^m \alpha_jv_j$. Since $ \gamma _ 0 > 0$, we have
\[
|x\cdot v_N|^{\gamma_0} \leq C\sum_{j=1}^m|x\cdot v_j|^{\gamma_0}.
\]
Inserting this inequality into the form gives 
\begin{equation}\label{Spread}
\begin{aligned}
\Lambda(f_1,\dots,f_N) & = \Lambda(f_1,\dots,\frac{|x\cdot v_N|^{\gamma_0}}
{|x\cdot v_N|^{\gamma_0}}f_N)  \\
&\leq C\sum_{j=1}^m \Lambda(f_1,\dots,|x\cdot v_j|^{\gamma_0}f_j,\dots,|x\cdot v_N|^{-\gamma_0}f_N)\\
& = C\sum_{j=1}^m \Lambda(|x\cdot v_1|^{\lambda_1-\beta_1^{(j,0)}}f_1,\dots,
|x\cdot v_N|^{\lambda_N-\beta_N^{(j ,0)}}f_N).
\end{aligned}
\end{equation}
This is the estimate  \eqref{estimate2}.

We should also note that for each $\beta^{(j_1,0)}\in \zeta_1$, we have 
$\beta_N^{(j_1,0)}=\min(0,\lambda_N+\lambda_m)$. If $\beta_N^{(j_1,0)}=0$, we are done.
Otherwise, we repeat the construction as follows: For each $\beta^{(j_1,0)}\in \zeta_1$,
define $\beta^{(j_1,j_2,0)}$, with $j_2\in \{1,\dots,m\}\setminus \{j_1\}$ by
setting 
\begin{align*}
\beta^{(j_1,j_2,0)}=\beta^{(j_1,0)}-\gamma_1 e_{j_2}+\gamma_1 e_N,
\end{align*}
where $\gamma_1 = \min(\lambda_{m+1},|\lambda_N + \gamma _0|)$. Arguing as above,
we have 
$(\frac{1}{p_j},\beta^{(j_1,j_2,0)})$ satisfies \eqref{Scaling}, \eqref{v1}, 
\eqref{v2}, and \eqref{I}. Moreover,
\[
\beta_N^{(j_1,j_2,0)}=\min(0,\lambda_m+\lambda_{m+1}+\lambda_N).
\]
Set $\zeta_2=\big\{\beta^{(j_1,j_2,0)}:j_1 \in\{1,\dots,m\},j_2\in\{1,\dots,m+1\}\setminus\{j_1\}\big\}$. Continuing in this 
manner, \eqref{step} guarantees that we have $\beta_N^{(\alpha)}=0$ for all $\beta^{(\alpha)} \in \zeta_i$ for
some $i\leq \ell-m+1$.  
This completes the proof of the Lemma \ref{InductionStep}.
\end{proof}

\section{Necessary Conditions}\label{SectionNec}
In this section we prove Theorem \ref{necthm}. It is worth noting that the results in this section do not require the condition that the set of vectors $E$ be generic. We begin with a simple, technical Lemma.


\begin{lemma}\label{sect3-lemma1}
    If $V$ and $W$ are vector subspaces of $ \reals ^m$  with $V\subset W $ and 
$\{ v_1,\dots,v_\ell \} \subset  W\setminus V$, 
then there exists $w\in W\cap V^\perp$ 
    so that $w\cdot v_j \neq 0, j=1,\dots,\ell$.
\end{lemma}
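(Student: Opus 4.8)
The plan is to use the orthogonal decomposition of $W$ relative to $V$ together with the fact that a real vector space is not a finite union of proper subspaces.

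First I would set $U = W\cap V^\perp$. Since $V\subset W$, we have the orthogonal direct sum decomposition $W = V\oplus U$. Fix $j\in\{1,\dots,\ell\}$ and write $v_j = a_j + b_j$ with $a_j\in V$ and $b_j\in U$. Because $v_j\notin V$, the component $b_j$ is nonzero; in particular $U\neq\{0\}$. Moreover, since $a_j\in V$ and $b_j\in U\subset V^\perp$ we have $a_j\cdot b_j = 0$, so
\[
v_j\cdot b_j = a_j\cdot b_j + b_j\cdot b_j = |b_j|^2 > 0 .
\]
Thus $b_j\in U$ witnesses that the linear functional $w\mapsto w\cdot v_j$ is not identically zero on $U$, so $H_j := \{\,w\in U : w\cdot v_j = 0\,\}$ is a proper subspace of $U$.

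Next I would invoke the elementary fact that a vector space over an infinite field cannot be written as a finite union of proper subspaces. Applying this to $U$ and the finitely many proper subspaces $H_1,\dots,H_\ell$, there exists $w\in U\setminus\bigcup_{j=1}^\ell H_j$. This $w$ lies in $W\cap V^\perp$ and satisfies $w\cdot v_j\neq 0$ for every $j$, which is exactly the conclusion. (Alternatively, when $\dim U\geq 1$ one may observe that each $H_j$ has measure zero in $U$, so the complement of their union is nonempty; either justification is routine.)

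I do not expect a genuine obstacle here: the only points needing a word of care are the orthogonal splitting $W = V\oplus U$, which follows since $V$ is a subspace of the inner product space $W$, and the "not a finite union of proper subspaces" principle, which holds because $\reals$ is infinite. Everything else is a direct computation.
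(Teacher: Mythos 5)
Your proof is correct, and it takes a genuinely different route from the paper's. The paper builds $w$ by an explicit greedy induction: it starts with $w_1 = v_1^\perp$ (the projection of $v_1$ onto $W\cap V^\perp$) and, at each stage, either keeps the current $w_\alpha$ or replaces it by $c\,w_\alpha + v_{\alpha+1}^\perp$ with a concretely chosen constant $c$ large enough to preserve the non-vanishing of the earlier inner products. You instead observe that each functional $w\mapsto w\cdot v_j$ is nontrivial on $U = W\cap V^\perp$ (because the $U$-component $b_j$ of $v_j$ is nonzero and $v_j\cdot b_j = |b_j|^2>0$), so each zero set $H_j$ is a proper subspace of $U$, and then you invoke the fact that a nonzero real vector space is not a finite union of proper subspaces. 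Your argument is shorter and avoids the bookkeeping with the constant $c$, at the cost of being nonconstructive; the paper's version hands you an explicit vector, which costs a few more lines but requires no appeal to the covering lemma (or the measure-zero alternative you mention). Both are complete proofs of the statement.
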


Before giving the proof, we introduce some additional notation.  In the
argument below, it will be useful to consider $ \reals ^{mk}$ as a 
tensor product, $ \reals ^ m \otimes \reals ^k$. Thus, if $ x = ( x^1, 
\dots, x^m) \in \reals ^ { mk}$ with each $ x^ j \in \reals ^k$, we can 
write $ x = \sum _ { j = 1} ^ m e_j \otimes x^j$ with $ e_j$ denoting 
the unit vector in the direction of the $j$th coordinate axis.  From this it is clear that
$\reals ^m \otimes \reals ^k$ is spanned by elements of the form $ y \otimes z$ with 
$ y \in \reals ^m$ and $z \in \reals ^k$. With 
this notation, our map $ v \cdot x $  can be defined on products by $ v\cdot(y\otimes z)= (v\cdot y) z$ 
and then we use linearity to extend the definition to all of $ \reals ^m \otimes \reals ^k$. 
\begin{proof}
We inductively define $w_\alpha$ for $\alpha=1,\dots,\ell$ so that 
\begin{align*}
    w_\alpha\cdot v_i \neq 0 ,\quad i=i,\dots,\alpha \,\,\, \mbox{and } \,w_\alpha\in W \cap V^\perp.
\end{align*}
At the conclusion, we will set $w=w_\ell$.

To begin, let $w_1 = v_1^\perp$, the projection of $v_1$ onto $V^\perp$. 
Next, given $w_\alpha$, if $w_\alpha \cdot v_{\alpha+1}\neq 0$ then set $w_{\alpha+1}=w_\alpha$. 
If, on the other hand, $w_\alpha \cdot v_{\alpha+1} = 0$,  set 
$w_{\alpha+1}=cw_\alpha+v_{\alpha+1}^\perp$. Here, we choose $c$ so that 
$|v_j\cdot w_{\alpha+1}| \geq 1$, $j=1,\dots,\alpha$. For example, one can choose
\begin{align*}
    c=\frac{1+\max\{|v_{\alpha+1}^\perp\cdot v_j| : j=1,\dots,\alpha\}}
    {\min\{|w_\alpha \cdot v_j| : j=1,\dots,\alpha\}}.
\end{align*}
This completes the construction.
\end{proof}

\begin{proof}[Proof of Theorem \ref{necthm}]
\comment{
We begin by applying Lemma \ref{sect3-lemma1} to the pair of subspaces 
$\{0\} \subset \reals ^m $ to find $w_1\in \{0\}^ \perp = \reals ^m$ and  satisfying 
    Let $V\subset \reals^m$ be a subspace. We may apply Lemma \ref{sect3-lemma1}
    with the subspace $\{0\}\subset V$ to find $w\in 0^\perp\cap V = V$ such 
    that $w\cdot v_j \neq 0$ for $v_j \in V$. 
    
    When $V=\reals^m$, Lemma \ref{sect3-lemma1} gives $w_1$ so that 
}
    We begin by applying Lemma \ref{sect3-lemma1} to the pair of subspaces 
    $\{0\} \subset \reals ^m $ to find $w_1\in \{0\}^ \perp = \reals ^m$ and  
    satisfying 
    \begin{equation*}
        w_1\cdot v_j\neq 0, \qquad j=1,\dots, N.
    \end{equation*}

    Next, for $w\in \reals^m$, $u \in \reals^k$, we define 
    $w\otimes u=(w^1u, w^2 u, \dots, w^mu) \in \reals^{mk}$. Assume $|u|=1$ and then set 
\begin{align*}
    S_R = B^{mk}(Rw_1 \otimes u , \epsilon R),
\end{align*}
where the superscript on $B$ is included to make clear that we have a ball in $\reals^{mk}$.
We can choose $\epsilon>0$ small so that 
\begin{align*}
    c_1R\leq |v_j\cdot x| \leq c_2 R, \qquad x \in S_R, \ j = 1, \dots, N.
\end{align*}
This follows since 
\begin{align*}
    |v_j \cdot Rw_1 \otimes u - v_j \cdot x| \leq  |v_j| \epsilon R
\end{align*}
and 
\begin{align*}
    |v_j \cdot Rw_1 \otimes u | = R|v_j\cdot w_1| |u| = R|v_j \cdot w_1|.
\end{align*}

If we let 
\begin{align*}
    f_j(y) = \chi_{[c_1, c_2]}\left(\frac{|y|}{R}\right), \quad y\in \reals^k,
\end{align*}
then $f_j(v_j \cdot x) = 1$ if $x\in S_R$ and $\|f_j\|_{L^{p_j}_{\lambda_j}} \approx R^{k/p_j +\lambda_j}$.
Thus if \eqref{mainest} holds with the indices $(1/p_j, \lambda_j)_{j=1}^N$,  we have
$$
    cR^{mk}\leq \int_{\reals^{mk}} \prod_{j=1}^N f_j(v_j\cdot x)\, dx 
    \leq C\prod_{j=1}^N R^{k/p_j + \lambda_j}.
$$
Since this holds for all $R \in (0, \infty)$, we can conclude
\begin{align*}
    m=\sum_{j=1}^N \left(\frac{1}{p_j}+\frac{\lambda_j}{k}\right),
\end{align*}
and so \eqref{Scaling} 
is proved.

We turn to the proof of \eqref{v1c}. To this end, let $ V \subset \reals ^m$ be a 
subspace. To begin we apply Lemma \ref{sect3-lemma1} to the pair of subspaces $\{0 \} 
\subset V$ to find $w_1\in V$ 
for which $$w_1 \cdot v_j \neq 0, \quad v_j \in V.$$
If $\{v_j : v_j \notin V\}$ is a nonempty set, then we find $w_2\in V^\perp$ so that 
\begin{align*}
    w_2\cdot v_j \neq 0,\quad v_j \notin V.
\end{align*}
This time around, we set
\begin{align*}
    S_R=B^{mk}(w_1\otimes u,\epsilon)\cap (V\otimes \reals^k) + B^{mk} 
    (Rw_2\otimes u, \epsilon R)\cap (V^\perp\otimes \reals^k).
\end{align*}
Here, $V\otimes \reals^k=\{x : x= v\otimes y, v \in V, y \in \reals ^k\}$ 
and $u \in \reals ^k$ is a unit vector. Then we have 
\begin{align*}
|S_R| \approx R^{k\dim(V^\perp)}.
\end{align*}
If we choose $\epsilon >0$ small and $R_0$ large, we can find $c_1, c_2$ so that 
\begin{align*}
    c_1 \leq |v_j \cdot x| \leq c_2, \quad v_j \in V, x\in S_R,
\end{align*}
and
\begin{align*}
    Rc_1 \leq |v_j\cdot  x| \leq c_2 R, \quad v_j \notin V, \, R> R_0, \, x\in S_R.
\end{align*}
Thus, with $f_j$ defined as 
\begin{align*}
    f_j(y) = \begin{cases}  \chi_{[c_1, c_2]} (|y|), &  v_j \in V,\\[10pt]
\chi_{[c_1,c_2]} (|y|/R), & v_j \notin V,
\end{cases}
\end{align*}
%
%
then it follows that $f_j(v_j\cdot x) = 1$ if $x\in S_R$ and $R>R_0$. In total, the boundedness of the form implies
\begin{align*}
    R^{k\dim(V^\perp)} & \leq C \int_{S_R} \prod_{j=1}^N f_j (v_j\cdot x) \, dx \\
    & \leq C \prod_{j=1}^N \|f_j\|_{L^{p_j}_{\lambda_j}}\\
    & \leq C\prod_{v_j\notin V} R^{k/p_j + \lambda_j}.
\end{align*}
Since this inequality holds for all $R>R_0$, we can conclude that 
\begin{align*}
    \sum_{v_j\notin V} \frac{k}{p_j} + \lambda_j \geq \dim (V^\perp).
\end{align*}

Finally, we establish \eqref{v2}, 
the inequality for the $\lambda_j$'s. 
         Again, fix a subspace $V\subset \reals^m$. Choose $w_1, w_2$ as 
         before:  
                $w_1\in V$ so that $w_1\cdot v_j \neq 0$ for all $v_j\in V$, 
                and $w_2\in V^\perp$ 
      so that $w_2\cdot v_j \neq 0$ for all $v_j \notin 
        V$. Set 
\begin{align*}
    w_N= (w_1 + Nw_2 )\otimes u,
\end{align*}
and then define
\begin{align*}
    S_N = B^{mk}(w_N,\epsilon).
\end{align*}
Here, $u\in \reals^k$ is a unit vector as usual. For $\epsilon >0$ small, we have 
\begin{align*}
    c_1 \leq |v_jx| \leq c_2, \quad x\in S_N, \, v_j \in V,
\end{align*}
and if $N$ is large, say $N>N_0$, 
\begin{align*}
|v_j \cdot x- v_j \cdot w_N \otimes u| \leq c\epsilon, \quad x\in S_N.
\end{align*}
This time around, set 
\begin{align*}
    f_j(y) =\begin{cases}  \chi_{[c_1, c_2]} (|y|), &  v_j \in V,\\[10pt]
\chi_{B(v_j\cdot w_N u , c\epsilon)}(|y|), & v_j \notin V.
\end{cases}
\end{align*}
%
%
Then it follows that 
\begin{align*}
    \prod_{j=1}^N f_j (v_j \cdot x) = 1, \quad \mbox{if }\, x\in S_N, \, N>N_0,
\end{align*}
and $\|f_j\|_{L^{p_j}_{\lambda_j}}\approx 1$ if $v_j\in V$ and 
$\|f_j\|_{L^{p_j}_{\lambda_j}}\approx N^{\lambda_j}$ if $v_j \notin V$.

Thus for $N \geq N_0$,  the boundedness of the form implies
$$
 C \leq \int_{\reals^{mk}} \prod_{j=1}^N f_j (v_j \cdot x)\,dx  
 \leq \prod_{j=1}^N \| f_k\|_{L^{p_j}_{\lambda_j}} \\
 \approx \prod_{v_j\notin V} N^{\lambda_j}.
$$
Since this inequality holds for all $N>N_0$, we have $\sum_{v_j\notin V} \lambda_j \geq 0$ as desired.

Our last step is to establish that the  condition \eqref{I} must hold if we have \eqref{mainest}. 
Towards this end, we define 
$$
f_j(t) = \sum _ { \ell =1 } ^ \infty a_{j,\ell} 2 ^ { -\ell (k /p_j + \lambda _j)} 
\chi_ { [2^\ell ,2^{\ell +1} ) }(|t|), \quad t \in \reals ^k, 
$$
where we assume each $ a_{j,\ell} \geq 0$. With this choice, we have 
$$\| f_j \| _ { L^{p_j}_{\lambda _j } } \leq C \left (\sum_{\ell = 1 } ^ \infty
a_ { j, \ell } ^ { p_j } \right)^{1/p_j}.$$
We will set $ a_ { j, \ell} = \ell ^ { -(1+\epsilon)/p_j}$ 
where $\epsilon >0 $. With this choice, we have 
$f_ j \in L ^{ p_j }_{ \lambda _j }.$

Next, we use Lemma \ref{sect3-lemma1} to find $w \in \reals ^m$ so that 
\begin{equation} \label{notzero}
v_j \cdot w \neq 0, \quad v_j \in E. 
\end{equation}
As before we choose a unit vector 
$u \in \reals ^k$ and set $ S_\ell = B^ { mk}(2^ \ell w \otimes u, 2^ \ell \epsilon)$. 
Thanks to \eqref{notzero} and our choice of $ a_{j,\ell}$, we may find  $\ell _0$
so that 
\begin{equation}
    \label{lowerbound}
f_j ( v_j \cdot x )\geq c \ell ^ { - (1+ \epsilon)/p_j } 2 ^ { -\ell ( k/p_j + \lambda _j ) }
, \qquad \ell \geq \ell _0, \,x \in S_\ell.
\end{equation}
Using \eqref{lowerbound} and that $ |S_ \ell | \approx 2 ^ {k\ell m}$, we have
$$
\sum _ { \ell \geq \ell _0 } 2^ { klm}\prod _ { j =1 }^N\ell^ { -( 1+ \epsilon)/p_j}2^{ -\ell ( k/p_j + \lambda _j ) }
\leq \sum _ { \ell = \ell _0} ^ \infty \int _ { S_ \ell} \prod _ { j=1 } ^N f_j ( v_j \cdot x ) \, dx 
\leq C \prod _ { j =1 }^N \| f_j \| _ { L^ { p_j }_ { \lambda _j}},
$$
where the last inequality follows since we assume the estimate \eqref{mainest}.  
Using our observation about $\| f_j \| _{ L^ {p_j}_{ \lambda _j}}$, we may conclude
$$ \sum _{ \ell = \ell _0} ^ \infty \ell ^ { - (1+ \epsilon) \sum _ { j =1 } ^N 1/p_j} 
\leq C (\sum _ { \ell =1 } ^ \infty \ell ^ { -(1+ \epsilon)} ) ^ { \sum _ { j =1} ^ N 1/p_j}.$$
In particular, we have that the sum $ \sum _ { \ell = \ell _0} ^ \infty 
\ell ^ {-(1+ \epsilon)\sum _ { j =1} ^N 1/p_j}$ is finite 
for each $ \epsilon >0$. This implies that we must have \eqref{I}. 
\end{proof}

\comment{See, in order: form.pdf Theorem 1.7, Lemma 1.16, and simple-forms.pdf 
Theorem 1.13. but need to review earlier proofs to make sure that the results apply to this
context. Need to review earlier proofs to make sure results are
proved in this case.}

\section{Examples}\label{examples}

In this section, we give several of examples of forms for where we can 
    use our theorem to study the boundedness.   We also give an 
        example of a non-generic set of vectors where a naive extension of 
                our methods fails to establish boundedness 
                in the interior of the polytope defined 
                    by the necessary conditions in 
                       Theorem \ref{necthm}.

\begin{example}
Let $E= \{ e_1+\dots + e_m, e_1, \dots, e_m  \} \subset \reals ^m$. Then it is 
easy to see that the set $E$ is generic. Thus Theorem \ref{suffthm} will apply to the form 
$$
\int _ {\reals ^m} f_0(x_1+ \dots + x_m)\prod _{ i =1 } ^m f_i(x_i)  \, dx. 
$$
\end{example}

Our next example shows how to generate arbitrarily large sets of generic  
vectors $E$ and helps to justify our use of the term generic.  
\begin{example} 
If $E_N= \{ v_1, \dots, v_N\} $ is a generic family in $ \reals ^m$, we may add an 
additional vector $v_{N+1} $ so that $ E_{N+1} = \{ v_1, \dots, v_{ N+1}\}$ is a 
generic set. By the generic condition on $E_N$, for each subset 
$K \subset E_N$ with $\#K = m-1$, 
$\spn (K)$ is a subspace of dimension $m-1$. As $ \reals ^m$ 
cannot be the union of a finite number of proper subspaces, we may 
find a vector $v_{N+1}$ which does not belong to any of the subspaces 
spanned the subsets of $E_N$ of cardinality $m-1$. Thus, 
the set $E_{N+1}$ is generic. 
For completeness, we note that we can begin with the standard 
basis and then use the inductive step above to 
generate large generic sets of vectors. 
\end{example} 
\smallskip

Our final observation shows some of the aforementioned limitations of the current methods. As we note in 
Theorem \ref{necthm}, our necessary conditions do  not require the set of vectors
$E$ to be generic. When we began this work, we had 
hoped to  establish 
a result that was close to necessary and sufficient for forms that are based on 
general sets of vectors $E$. Unfortunately, we are not able to do this.  
We will give a non-generic set of five vectors in $ \reals ^3$ and show that a naive attempt to extend the proof
of Theorem \ref{suffthm} fails to establish that \eqref{mainest} holds for
indices in the interior of the polytope given by Theorem \ref{necthm}.

\begin{example} 
Consider the set of 5 vectors in $ \reals ^3$ given by
$$ E = \{ e_1, e_1 + e_2, e_1+ e_3, e_1-e_2, e_1-e_3\}. $$
We will label these vectors by  $v_1=e_1, v_2= e_1+ e_2, v_3 =
e_ 1+ e_3, v_4 = e_1 - e_2, v_5 = e_1-e_3$.
Note that there are two dependent sets of three vectors in $E$.

As before, the result of Bennett {\em et~al.}~characterizes the 
indices for which we have \eqref{mainest}   when the exponents for 
the weights $\lambda _j $ are zero.  
Continuing, we may extend the result to Lorentz spaces and use 
the generalized H\"older's inequality to obtain \eqref{mainest} in 
the case that the weights are non-negative. We only need to note 
that as long as no two vectors are collinear, we may use 
\eqref{v1} as in the proof of \eqref{NotOne} to conclude that 
$ 0 < 1/p_j + \lambda _j  <1$ (here $k=1$). 

We consider the vector  of indices
$$
(1/p_1,  \dots, 1/p_5,\lambda_1,\dots, \lambda _5)
=(11/15,  6/15,  2/3, 6/15,  2/3,  -2/15, 2/15, 0, 2/15, 0). 
$$
We leave it as an exercise to verify that this vector of 
indices satisfies the conditions of Theorem 
\ref{suffthm}. The most interesting condition to check is \eqref{v1}, and here one must note that because of the linear dependencies $2v_1 = v_2 + v_4$ and $2v_1 = v_3 + v_5$, there are two sums in \eqref{v1} to check where $\mbox{dim}(V)=2$ and the condition $v_j\notin V$ is true for two indices $j$ rather than three due to the linear dependence.

We observe that we have a linear dependency 
$ 2v_1 = v_2 + v _ 4$ and  arguing as in \eqref{Spread}
we can show that
$$\Lambda ( f_1, \dots f_5)
\leq C ( \Lambda ( |\cdot |^ { -2/15}f_1, |\cdot|^{2/15}f_2,
f_3, f_4, f_5 ) 
+  \Lambda ( |\cdot |^ { -2/15}f_1, f_2,
f_3,|\cdot|^{2/15} f_4, f_5 )). 
$$
We would like to have the estimate \eqref{mainest} 
for the vectors of indices:
\begin{align*}
(1/p_1, \dots, 1/p_5, \beta_1^{(2,0)}, \dots \beta_5 ^{(2,0)}) &=
(11/15, 6/15,  2/3,  6/15,  2/3, 0 , 0, 0,2/15,0 ),\\
(1/p_1,  \dots, 1/p_5,\beta_1 ^ { (4,0)}, \dots, \beta_5 ^{(4,0)}) 
&=(11/15,  6/15,  2/3,  6/15, 2/3, 0, 2/15, 0,0,0 ).
\end{align*}
If we consider the inequality \eqref{v1} for the subspace 
$V = \spn \{ v_1, v_3, v_5\} = \spn \{ e_1, e_3\}$, we see that 
for both $\beta^{(2,0)} $ and $ \beta^{(4,0)}$
$$
\sum _{ v_j \notin V } \frac 1 { p_j } + \beta ^{(k,0)}_j = \frac 1 { p_2 } 
+ \beta^{(k,0)} _2 + \frac 1 { p_4} + \beta^{(k,0)} _4
= \frac { 14}{ 15} <1=  3 - \dim V. 
$$
%
%
Thus, this avenue of proof would require us to use an estimate which Theorem \ref{necthm}
tells us cannot hold. 
\end{example} 

We close by listing several problems related to our work worthy of further study. 

\begin{itemize}
    \item Find conditions which are close to necessary and sufficient for forms that
    are based on general sets of vectors $E$, rather than being restricted to 
    generic sets of vectors. 
    \item Consider weighted estimates for the more general forms such as those studied in 
    Bennett {\em et~al.} \cite{MR2661170}. 
    \item Find conditions on families of more general weights which 
allow us to establish that a Brascamp-Lieb form is bounded on weighted $L^p$-spaces. 
\end{itemize}


\newpage
\bibliographystyle{plain}

\bibliography{main,inverse}

\smallskip
\noindent
\today

\end{document}